\numberwithin{equation}{section}
\newcommand{\lcm}{\text{lcm}}
\title[Classification of numbers satisfying a theta function identity]{Proof of the Ballantine-Merca Conjecture and theta function identities modulo $2$}
\date{January 2021}
\thanks{2010 \emph{Mathematics Subject Classification.} 05A17,  05A19, 11P81, 11P83, 11F27}
\thanks{Keywords: Theta functions, Partitions, Parity}
\author{Letong Hong, Shengtong Zhang}
\address{Department of Mathematics, Massachusetts Institute of Technology, Cambridge, MA 02139}
\email{clhong@mit.edu, stzh1555@mit.edu}
\begin{document}
\begin{abstract}
\vspace{-1em}
For positive integers $m$ we consider the theta functions $f_m(z):=\sum_{mk+1\text{ square }}q^k$. Due to classical identities of Jacobi, it is known that $$f_4\equiv f_6f_{12}\pmod 2.$$ Here we prove that the only triples $(a,b,c)$ for which $f_a\equiv f_bf_c\pmod 2$ are of the form $(2q,4q,4q)$ or $(4q,4q,8q)$, where $q$ is any positive odd number, or belong to the following finite list $$\{(4,6,12),(6,8,24),(8,12,24),(10,12,60),(15,24,40),(16,24,48),(20,24,120),(21,24,168)\}.$$ The result is inspired by the Ballantine-Merca Conjecture on recurrence relations for the parity of the partition function $p(n)$, which we also prove here.
\vspace{-1em}
\end{abstract}
\maketitle
\section{Introduction and Statement of Results}
 We recall the famous theta functions defined by Euler and Jacobi:
\begin{equation}\label{eulerjacobi}
(q;q)_{\infty} = \sum_{n = -\infty}^\infty (-1)^n q^{n(3n + 1) / 2}, \end{equation}\vspace{-0.5em}
\begin{equation*}
(q;q)_{\infty}^3 = \sum_{n = 0}^\infty (-1)^n (2n + 1) q^{n(n + 1) / 2},\end{equation*}
 where $(q;q)_{\infty}:=\prod_{n=1}^{\infty} (1-q^n).$
These identities motivate the definition of the following class of theta functions:
$$f_a(q) := \sum_{an + 1 \text{ is a square}} q^n.$$
Observe that Euler and Jacobi's identities give us
$$(q;q)_{\infty}^a \equiv f_{24/a}(q) \pmod 2$$
for $a \in \{1,2,3,4,6\}$. The goal of our paper is to classify such identities modulo $2$. Our main result is the following classification theorem.
\begin{theorem}
\label{thm:Classification}
The only triples of positive integers $(a,b,c)$ such that $f_a\equiv f_bf_c\pmod 2$ are of the form $$(2q,4q,4q),(4q,8q,8q),$$ where $q$ is any positive odd number, or are members of the finite set: $$\{(4,6,12),(6,8,24),(8,12,24),(10,12,60),(15,24,40),(16,24,48),(20,24,120),(21,24,168)\}.$$
\end{theorem}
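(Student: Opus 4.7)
The plan is to verify the listed triples (sufficiency) and then rule out all others (necessity).

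\textbf{Sufficiency.} For the infinite families, I would prove the stronger exact identity $f_{2q}(q) = f_{4q}(q^2)$ for odd $q$, which gives $f_{2q} \equiv f_{4q}^2 \pmod 2$ via the Frobenius relation $g(q)^2 \equiv g(q^2) \pmod 2$. The exact identity follows from a direct bijection: every representation $2qn+1 = j^2$ has $j$ odd, and writing $j = 2k+1$ yields $qn = 2k(k+1)$; since $q$ is odd we have $q \mid k(k+1)$, which in turn forces $n$ to be even, so $n = 2m$ satisfies $4qm + 1 = j^2$. The analogous argument handles $(4q, 8q, 8q)$. Of the eight sporadic triples, $(4,6,12)$, $(6,8,24)$, $(8,12,24)$ are immediate from $(q;q)_\infty^a \equiv f_{24/a}(q) \pmod 2$ and trivial factorizations of powers of $(q;q)_\infty$; the remaining five must be verified by computing sufficiently many Fourier coefficients, justified by the fact that $f_a - f_b f_c$ lies in a controllable space of modular forms mod $2$ on a congruence subgroup for which a Sturm-type bound applies.

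\textbf{Necessity.} Here I treat $f_m \pmod 2$ as the characteristic series of $T_m := \{n \geq 0 : mn+1 \text{ is a perfect square}\}$, so $f_a \equiv f_b f_c \pmod 2$ is equivalent to the statement that, for every $n$, the parity of $\#\{(x, y) \in T_b \times T_c : x + y = n\}$ equals $1$ when $n \in T_a$ and $0$ otherwise. Matching the coefficient of $q^1$ immediately constrains whether $a+1, b+1, c+1$ are perfect squares, and successive parity conditions at $q^2, q^3, \ldots$, combined with the density estimate $|T_m \cap [0, N]| = \Theta(\sqrt{N/m})$, should funnel the surviving triples into either the diagonal case $b = c$---which via $f_b^2 \equiv f_b(q^2) \pmod 2$ reduces to the exact identity $f_a(q) = f_b(q^2)$ and yields the two infinite families---or a finite search with $\max(a,b,c)$ explicitly bounded.

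\textbf{Main obstacle.} The hardest step is establishing that explicit bound on $\max(a,b,c)$ in the non-diagonal case. Low-order coefficient matching restricts $(a,b,c)$ modulo small numbers but does not immediately bound their size, and one must rule out accidental agreement to high order. I would attack this by exploiting the arithmetic of $j^2 \equiv 1 \pmod{\lcm(a,b,c)}$---via character sums or quadratic reciprocity applied to the factorization $j^2-1 = (j-1)(j+1)$---to force any solution with large $\lcm$ to fail at a predictable coefficient, supplemented by a modular-forms-mod-$2$ argument to clear the finitely many residual candidates. Extracting this bound sharply enough to yield exactly the eight sporadic triples, and no others, is the crux of the argument.
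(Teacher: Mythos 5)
Your sufficiency direction is essentially fine: the bijection $f_{2q}(x)=f_{4q}(x^2)$ for odd $q$ (and its analogue for $(4q,8q,8q)$) combined with the Frobenius congruence is exactly how the two infinite families work, and the three sporadic triples coming from $(q;q)_\infty^a\equiv f_{24/a}\pmod 2$ are immediate. But the necessity direction --- which is the entire content of the theorem --- is a plan rather than a proof, and you concede as much: "extracting this bound\dots is the crux of the argument." Low-order coefficient matching and the density of $T_m$ cannot work as stated, because the parity of the convolution $\#\{(x,y)\in T_b\times T_c: x+y=n\}$ is not controlled by densities, and no finite amount of coefficient matching bounds $\max(a,b,c)$. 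The paper closes this gap with three specific number-theoretic inputs that your proposal does not contain. First, writing the identity as a parity statement about solutions of $cy^2+bz^2=bck+b+c$ and scaling by $p^t$ for a prime $p$ with $\left(\frac{-bc}{p}\right)=-1$ forces the exact relation $\frac{1}{a}=\frac{1}{b}+\frac{1}{c}$. Second, with $d=\gcd(b,c)$, $b'=b/d$, $c'=c/d$, one constructs (via the Chinese remainder theorem, quadratic reciprocity, and Dirichlet) primes $p$ with $a\mid p^2-1$ and $\left(\frac{-b'c'}{p}\right)=-1$; for such $p$ the only admissible solution is $(y,z)=(p,p)$, which forces $b\mid p^2-1$ and $c\mid p^2-1$ and yields $(b'+c')\mid 24$ together with $v_2(d)\le 4$, $v_3(d)\le 1$. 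Third, for $b'\neq c'$, Weber's theorem on primes $p=u^2+b'c'v^2$ with prescribed congruences on $u,v$, plus an explicit determination of all representations of $(b'+c')p$ by $c'y^2+b'z^2$, gives $d\mid 24(b'-c')$. Only after these three results is the problem reduced to a finite check; the diagonal case $b'=c'$ is then handled separately by an involution argument, recovering exactly your two infinite families.

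Your instinct to exploit the arithmetic of $j^2\equiv 1\pmod{\lcm(a,b,c)}$ via quadratic reciprocity is pointing in the right direction --- that is indeed the engine of the paper's Lemma on the existence of suitable primes --- but the decisive idea you are missing is to evaluate the counting identity at specially chosen values of $n$ (namely $n=(p^2-1)/a$ for primes $p$ engineered so that the solution set degenerates to a single known pair), rather than at small $n$. Without that, there is no route from congruence constraints to an absolute bound on $(a,b,c)$, and the finite search you invoke at the end has no justified search space.
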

This theorem is related to recent work of Ballantine and Merca \cite{BM17} on the integer partition function. Recall that a partition of any non-negative integer $n$ is any non-increasing sequence of positive integers which sum to $n$, and $p(n)$ denotes their total number (by convention, $p(0) = 1$). The generating function for $p(n)$ is
$$P(q):=\sum_{n = 1}^\infty p(n)q^n = \prod_{n = 1}^\infty \frac{1}{1 - q^n} = \frac{1}{(q;q)_{\infty}}.$$
Thanks to the Euler-Jacobi identity (\ref{eulerjacobi}), this gives the recurrence
$$p(n) = \sum_{j = 1}^n (-1)^{j + 1}\left[p\left(n - \frac{1}{2}j(3j - 1)\right) + p\left(n - \frac{1}{2}j(3j + 1)\right)\right].$$
Ballantine and Merca demonstrated further recurrences for the parity of $p(n)$. They proved that for $(a,b)$ in a certain set (listed below), the quantity $$\sum_{ak+1 \text{ square}}p(n-k)$$ is odd if and only if $bn+1$ is a square. They asked whether the converse is true.
\begin{conjecture*}[\cite{BM17}]
We have that 
\begin{equation}
\label{eq: main}
 \sum_{\text{ak+1 is square}}p(n-k)\equiv 1\pmod 2 \ \ \text{ if and only if } bn+1 \text{ is a square}.   
\end{equation} is true only for $(a,b)\in \{(6, 8), (8, 12), (12, 24), (15, 40), (16, 48), (20, 120), (21, 168)\}.$
\end{conjecture*}
We note that statement \cref{eq: main} is equivalent to the modulo $2$ theta function identity $$f_a \equiv f_bf_{24} \pmod 2.$$ Thus \cref{thm:Classification} resolves Ballantine-Merca's conjecture.
\begin{corollary}
\label{thm:B-M}
The Ballantine-Merca Conjecture is true.
\end{corollary}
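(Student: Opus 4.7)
The plan is to derive Corollary \ref{thm:B-M} as an immediate consequence of Theorem \ref{thm:Classification}. The paper has already noted that the Ballantine-Merca condition \eqref{eq: main} is equivalent to the theta-function identity $f_a \equiv f_b f_{24} \pmod 2$, so my task reduces to inspecting the classification in Theorem \ref{thm:Classification} and extracting those triples $(a,b,c)$ in which one of the last two coordinates (equivalently, by symmetry of the product, $b$ or $c$) equals $24$.

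First I would check the two infinite families. In $(2q,4q,4q)$ with $q$ odd, requiring $4q = 24$ forces $q = 6$, contradicting the parity constraint, so this family contributes nothing. In $(4q,8q,8q)$ with $q$ odd, requiring $8q = 24$ gives $q = 3$, which is admissible; this yields the single triple $(12,24,24)$ and hence the pair $(a,b) = (12,24)$. Next I would sweep the finite exceptional list and retain the triples containing $24$, namely $(6,8,24), (8,12,24), (15,24,40), (16,24,48), (20,24,120), (21,24,168)$; these give the pairs $(6,8), (8,12), (15,40), (16,48), (20,120), (21,168)$. Combined with $(12,24)$ from the infinite family, the resulting list matches the Ballantine-Merca list exactly.

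There is essentially no obstacle in this step: the deduction is pure bookkeeping once Theorem \ref{thm:Classification} is in hand. All the genuine difficulty of Corollary \ref{thm:B-M} has been pushed into that classification theorem, whose proof is where the real work must be carried out.
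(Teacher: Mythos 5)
Your deduction is correct and complete: the bookkeeping over the two infinite families and the finite exceptional list does yield exactly the seven Ballantine--Merca pairs, and you correctly handle the symmetry in $b,c$ and the parity obstruction that kills $(2q,4q,4q)$. However, your route differs from the paper's own proof in Section 2. You derive the corollary from the full classification in Theorem \ref{thm:Classification}, whereas the paper deduces it already from the much weaker Proposition \ref{prop:main}: the identity $f_a\equiv f_b f_{24}\pmod 2$ forces $\frac1a=\frac1b+\frac1{24}$, which leaves only finitely many candidate pairs (the seven listed ones together with a few extras such as $(22,264)$ and $(23,552)$, all with $a,b\le 100000$), and these were all already checked computationally by Ballantine and Merca, so the conjecture follows without invoking Propositions \ref{prop:b'c'} and \ref{prop:d} at all. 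The trade-off is clear: your argument is logically self-contained given the main theorem but rests on the hardest result in the paper, while the paper's argument gets the corollary cheaply from the Egyptian-fraction constraint at the cost of citing an external finite verification. Both are valid proofs; yours is essentially the deduction gestured at in the introduction ("Thus Theorem \ref{thm:Classification} resolves Ballantine--Merca's conjecture"), while the paper's Section 2 shows the conjecture needs far less than the full classification.
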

The paper is organized as follows. In Section \ref{section2} we give an important proposition that leads directly to the proof of the Ballantine-Merca Conjecture, and in Section \ref{section3} we prove Theorem \ref{thm:Classification} by discovering some further constraints.

\section*{Acknowledgements}
We would like to thank Professor Ken Ono for his guidance and many insightful discussions. We also thank Dr Mircea Merca for pointing out a typographical error in an earlier version of the paper. We are grateful to the anonymous referee for detailed and helpful suggestions. The research was supported by the generosity of the National Science Foundation under grant DMS-2002265, the National Security Agency under grant H98230-20-1-0012, the Templeton World Charity Foundation, and the Thomas Jefferson Fund at the University of Virginia.

\section{Proof of \cref{thm:B-M}}\label{section2}
In this section we prove the following important proposition that resolves \cref{thm:B-M}. 
\begin{proposition}
\label{prop:main}
If $a,b,c$ are positive integers with $f_a\equiv f_bf_c\pmod 2$, then $\frac{1}{a}=\frac{1}{b}+\frac{1}{c}.$
\end{proposition}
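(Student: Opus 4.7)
The plan is to exploit the $q\to 1^{-}$ asymptotic behavior of $f_a$, combined with a scaling substitution that converts the mod-$2$ identity into a theta-function identity. Concretely, as $t\to 0^{+}$, Poisson summation applied to $\sum_{m\ge 1,\,m^2\equiv 1\bmod a} e^{-t(m^2-1)/a}$ on each admissible residue class $r\pmod a$ with $r^2\equiv 1\pmod a$ gives $f_a(e^{-t})\sim \frac{r(a)\sqrt{\pi}}{2\sqrt{at}}$, where $r(a):=\#\{x\bmod a: x^2\equiv 1\bmod a\}$. Multiplying two such asymptotics yields $f_b(e^{-t})f_c(e^{-t})\sim \frac{\pi\,r(b)r(c)}{4\sqrt{bc}\,t}$.

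Next, I would use the algebraic content of the mod-$2$ identity. Because $f_a$ has $\{0,1\}$-valued coefficients and $[q^n](f_bf_c)$ matches $[q^n]f_a$ in parity, while $[q^n](f_bf_c)\ge 0$, we get $f_bf_c - f_a\ge 0$ coefficient-wise, so $(f_bf_c-f_a)/2$ lies in $\mathbb{Z}_{\ge 0}[[q]]$. Setting $L:=\mathrm{lcm}(a,b,c)$ and substituting $q\mapsto q^{L}$, the identity $f_a\equiv f_bf_c\pmod 2$ becomes $f_a(q^L)\equiv f_b(q^L)f_c(q^L)\pmod 2$. Multiplying by $q^{L/a}$ converts each factor into a theta sum: $q^{L/a}f_a(q^L)=\sum_{m\ge 1,\,m^2\equiv 1\bmod a} q^{Lm^2/a}$, and similarly for $b,c$. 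The resulting congruence reads
\[
\sum_{m^2\equiv 1\bmod a} q^{Lm^2/a}\;\equiv\; q^{\epsilon}\!\!\sum_{\substack{x^2\equiv 1\bmod b\\ y^2\equiv 1\bmod c}} q^{Lx^2/b+Ly^2/c}\pmod 2,
\]
where $\epsilon:=L/a-L/b-L/c$ and all sums are over positive integers. The desired conclusion $\tfrac{1}{a}=\tfrac{1}{b}+\tfrac{1}{c}$ is exactly the assertion $\epsilon=0$.

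The final and hardest step is to rule out $\epsilon\ne 0$. A naive growth-rate comparison at $q\to 1^{-}$ fails, because the LHS grows like $t^{-1/2}$ while the RHS grows like $t^{-1}$ regardless of $\epsilon$. Instead, I would compare the sparse mod-$2$ supports on the two sides: the LHS support $\{Lm^2/a\}$ has density $\sim C_a\sqrt{N}$ up to $q^N$, while the RHS mod-$2$ support consists of those lattice-point exponents attained an odd number of times by the binary quadratic form $cx^2+by^2$ (with the given congruence constraints), shifted by $\epsilon$. Assuming $\epsilon\ne 0$, examining the smallest positive exponent beyond $L/a$ on each side — together with the fact that the $(1,1)$-contribution on the right is forced to sit at $q^{L/a}$ — should yield an exponent that has odd multiplicity on one side but not the other, contradicting the congruence. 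Making this mismatch argument rigorous, i.e.\ showing that the extra $q^{\epsilon}$ factor breaks the delicate cancellation required for the mod-$2$ identity, is the crux of the proof and the step I expect to be the main obstacle.
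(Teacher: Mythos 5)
Your setup is sound as far as it goes: reformulating $f_a\equiv f_bf_c\pmod 2$ as the statement that the exponent set $\{Lm^2/a\}$ must agree, modulo $2$, with the $q^{\epsilon}$-shifted representation counts of the constrained form $Lx^2/b+Ly^2/c$ is correct, and it is essentially the same reformulation the paper starts from (its equation $bz^2+cy^2=bck_1+b+c$ with the divisibility side conditions). But the argument stops exactly where the proposition has content: you never show $\epsilon=0$, and you say yourself that this is ``the step I expect to be the main obstacle.'' Moreover, the specific route you sketch for closing the gap does not obviously work. The lowest exponents on the two sides match \emph{automatically} for every value of $\epsilon$: the $m=1$ term sits at $L/a$ and the $(x,y)=(1,1)$ term sits at $\epsilon+L/b+L/c=L/a$ by the very definition of $\epsilon$, so ``the $(1,1)$-contribution is forced to sit at $q^{L/a}$'' is a tautology, not a constraint. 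To extract anything from the next exponent you would need to control the parity of the number of representations of the second-smallest value of the constrained quadratic form, which is precisely the arithmetic information the proposal has no handle on. The Poisson-summation asymptotics in your first paragraph are correct but, as you yourself note, say nothing about a congruence modulo $2$, so they play no role.

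The paper's proof supplies the missing idea, and it is worth comparing: choose a prime $p$ with $\left(\tfrac{-bc}{p}\right)=-1$ and an exponent $t$ with $p^t\equiv 1\pmod{bc}$. Inertness forces $p\mid y$ and $p\mid z$ in any representation $bz^2+cy^2=p^{2}N$, so $(y,z)\mapsto(p^ty,p^tz)$ is a parity-preserving bijection between the solution sets for $N$ and for $p^{2t}N$, compatible with the conditions $b\mid(y^2-1)$ and $c\mid(z^2-1)$. Translated back through the generating function, this says $ak_1+1$ is a square if and only if $p^{2t}(ak_1+1)+(p^{2t}-1)\left(b^{-1}+c^{-1}-a^{-1}\right)$ is a square; since two perfect squares can differ by a fixed nonzero constant only finitely often, and there are infinitely many admissible $k_1$, the constant must vanish, giving $1/a=1/b+1/c$. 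That multiplicative self-similarity of the representation problem is what your support-comparison lacks; without it, or some substitute for it, the proposal does not prove the proposition.
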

\begin{proof}
If $f_a\equiv f_bf_c\pmod 2$ then for all $k_1\in \mathbb{N}$, $ak_1+1=x^2$ for some integer $x$ if and only if there exists an odd number of pairs $(y,z)\in \mathbb{N}^2$ such that \begin{equation}\label{eq0.1}
k_1 =\frac{y^2-1}{b}+\frac{z^2-1}{c},\quad b \mid (y^2 - 1),\quad c \mid (z^2 - 1).\end{equation} We rewrite the equation as \begin{equation}\label{eq0.2} bz^2+cy^2=bck_1 + b + c,\quad b \mid (y^2 - 1),\quad c \mid (z^2 - 1).\end{equation}
Notice that if we take a prime number $p$ such that the Legendre symbol $( \frac{-bc}{p})=-1$, the integer solutions for the equation $bm^2+cn^2=N$ are in bijection with the integer solutions for $bm^2+cn^2=p^{2t}N$ by considering $(m,n) \to (p^t m, p^t n)$. Thus, if we take $t$ such that $p^t \equiv 1\pmod{bc}$, then the number of positive integer solutions to 
$$bz^2+cy^2=bck_1 + b + c,\quad b \mid (y^2 - 1),\quad c \mid (z^2 - 1).$$
is the same as the number of positive integer solutions to
$$bz^2+cy^2=p^{2t}(bck_1 + b + c),\quad b \mid (y^2 - 1),\quad c \mid (z^2 - 1).$$
Now note that
$$p^{2t}(bck_1 + b + c) = bc\left(p^{2t}k_1 + \frac{(p^{2t} - 1)(b + c)}{bc}\right) + b + c.$$
Therefore, we conclude that $ak_1 + 1$ is a perfect square if and only if
$$a\left(p^{2t}k_1 + \frac{(p^{2t} - 1)(b + c)}{bc}\right) + 1$$
is a perfect square. There is an infinite number of $k_1$ such that $ak_1 + 1$ is square. For each such $k_1$, we have
$$a\left(p^{2t}k_1 + \frac{(p^{2t} - 1)(b + c)}{bc}\right) + 1 = p^{2t}(ak_1 + 1) + (p^{2t} - 1)\left(b^{-1} + c^{-1} - a^{-1}\right).$$
Thus the perfect square
$$a\left(p^{2t}k_1 + \frac{(p^{2t} - 1)(b + c)}{bc}\right) + 1$$
and the other perfect square
$$p^{2t}(ak_1 + 1)$$
differ by a constant
$$(p^{2t} - 1)\left(b^{-1} + c^{-1} - a^{-1}\right).$$ For infinitely many $k_1$ to satisfy this property above, the only possibility is that 
$$(p^{2t} - 1)\left(b^{-1} + c^{-1} - a^{-1}\right) = 0,$$
which implies 
$1/a = 1/b + 1/c,$ the desired result.
\end{proof}

As we stated in the introduction, the modulo constraint \cref{eq: main} in Ballantine-Merca Conjecture is true if and only if $$f_a\equiv f_bf_{24}\pmod 2.$$ Applying \cref{prop:main} we obtain $$\frac{1}{a}=\frac{1}{b}+\frac{1}{24}$$ and the listed pairs in Ballantine and Merca's paper plus $(22, 264), (23, 552)$ are all solutions. Since they already verified all $a,b \leq 100000$, the conjecture is solved.

\section{Proof of \cref{thm:Classification}} \label{section3}
In this section we apply more sophisticated analysis to prove \cref{thm:Classification}. Throughout the section, we assume $a,b,c$ are positive integers with $f_a \equiv f_bf_c\pmod 2$. We denote $d = \gcd(b,c)$, $b' = \frac{b}{d}$, and $c' = \frac{c}{d}$. For a prime $p$ and integer $n$, denote $v_p(n) = \max \{a \geq 0: p^a | n\}.$
\subsection{A bound on $b' + c'$.}
\begin{proposition}
\label{prop:b'c'}
If positive integers $a,b,c$ satisfy $f_a \equiv f_bf_c\pmod 2$, then we must have $$(b' + c') \mid 24.$$
Furthermore, if $b' + c'$ is divisible by $2$ then we have $v_2(d) \leq 4$, and if $b' + c'$ is divisible by $3$ then we have $v_3(d) \leq 1$. 
\end{proposition}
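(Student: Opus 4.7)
The approach builds on the reformulation already used in the proof of Proposition~\ref{prop:main}. Combining $\tfrac{1}{a} = \tfrac{1}{b} + \tfrac{1}{c}$ with the equation $bz^2 + cy^2 = bck + b + c$ from that proof, and substituting $ak + 1 = x^2$, yields the key conic
\[
bz^2 + cy^2 = (b+c)x^2,
\]
or, after dividing through by $d$,
\[
b'z^2 + c'y^2 = (b'+c')x^2.
\]
The hypothesis $f_a \equiv f_b f_c \pmod 2$ translates, coefficient by coefficient, into the following parity statement: for every positive integer $x$ with $a \mid x^2 - 1$, the number of positive-integer pairs $(y, z)$ with $b \mid y^2 - 1$ and $c \mid z^2 - 1$ satisfying this conic must be \emph{odd}; for any other $k$ (with $ak+1$ not a square), the corresponding count must be \emph{even}.

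The plan is to exploit the geometry of this conic. It always admits the ``trivial'' solution $(y, z) = (x, x)$, but the divisibility constraints are met by this solution only when $\lcm(b, c) \mid x^2 - 1$, equivalently when $(b'+c') \mid (x^2-1)/a$. For an admissible $x$ (one with $a \mid x^2 - 1$) at which this stronger divisibility fails, the trivial solution is not counted, so the required odd parity must be supplied by \emph{non-trivial} integer solutions. Combined with the $p^{2t}$-multiplication bijection from the proof of Proposition~\ref{prop:main} -- which identifies solution counts at $x$ with those at $p^t x$ for primes $p$ inert in $\mathbb Q(\sqrt{-b'c'})$ -- and with the observation that modulo any prime $p \mid b'+c'$ every conic solution satisfies $y \equiv \pm z \pmod p$, the non-trivial solutions can be paired off systematically. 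The resulting analysis should show that, unless $(b'+c') \mid 24$, one can produce an admissible $x$ for which the non-trivial solutions cancel in pairs while the trivial solution is not counted, yielding an even total in contradiction to the parity condition.

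Once $(b'+c') \mid 24$ is established, the refined bounds $v_2(d) \leq 4$ and $v_3(d) \leq 1$ should follow from tracking $p$-adic valuations within the same framework: when $2 \mid b'+c'$ and $v_2(d) \geq 5$ (or when $3 \mid b'+c'$ and $v_3(d) \geq 2$), the congruences $b \mid y^2 - 1$ and $c \mid z^2 - 1$ become so restrictive modulo high powers of $2$ or $3$ that every non-trivial solution is eliminated for some well-chosen admissible $x$, again forcing an even count.

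The main obstacle will be verifying the parity of the non-trivial solution count for well-chosen $x$: one must rule out, for primes $p \geq 5$ dividing $b'+c'$, any exotic non-trivial solutions that could ``rescue'' the required odd parity. This reduces to a refined representation-theoretic analysis of the binary quadratic form $b'z^2 + c'y^2$ under congruence restrictions, and a delicate case split by the $p$-adic valuations of $d$ for $p \in \{2, 3\}$ in order to pin down the exact thresholds $v_2(d) \leq 4$ and $v_3(d) \leq 1$.
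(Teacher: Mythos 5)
Your setup is the right one---the conic $c'y^2+b'z^2=(b'+c')x^2$, the trivial solution $(y,z)=(x,x)$, and the strategy of exhibiting an admissible $x$ at which the trivial solution fails the divisibility constraints while nothing else contributes an odd count---but the step you yourself flag as ``the main obstacle'' is the entire content of the proposition, and your proposal defers rather than supplies it. The paper's resolution is not a pairing-off of non-trivial solutions and requires no representation-theoretic analysis: one chooses $x$ so that $ax+1=p^2$ for a prime $p$ with $a\mid p^2-1$ and $\left(\frac{-b'c'}{p}\right)=-1$. Reducing $c'y^2+b'z^2=(b'+c')p^2$ modulo $p$ then forces $p\mid y$ and $p\mid z$ (else $-b'c'$ would be a quadratic residue mod $p$), and after dividing out $p^2$ the equation $c'Y^2+b'Z^2=b'+c'$ with $Y,Z\ge 1$ has only $(Y,Z)=(1,1)$. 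So $(p,p)$ is the \emph{unique} candidate; there are no exotic non-trivial solutions to rule out, and since $ax+1=p^2$ is a square the parity condition forces $(p,p)$ to actually satisfy the constraints, i.e.\ $\lcm(b,c)\mid p^2-1$.

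The second half, which you also leave as ``should follow from tracking $p$-adic valuations,'' is where the specific constants come from. One first notes $(b'+c')\mid d$ (from $a(b'+c')=db'c'$ and $\gcd(b'c',b'+c')=1$), so for a prime $q\mid b'+c'$ the hypothesis $a\mid p^2-1$ only constrains $p$ modulo $q^{v_q(d)-v_q(b'+c')}$, while the conclusion demands $q^{v_q(d)}\mid p^2-1$. A Chinese-remainder/quadratic-reciprocity construction (plus Dirichlet) produces primes satisfying the hypotheses but violating the conclusion unless $q=3$ with $v_3(d)\le 1$ or $q=2$ with $v_2(d)$ small; this is exactly what yields $(b'+c')\mid 24$ and the stated valuation bounds, and none of it is present in your sketch. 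Your proposed alternative mechanism---the $p^{2t}$-multiplication bijection combined with pairing non-trivial solutions via $y\equiv\pm z\pmod{q}$ for $q\mid b'+c'$---is not developed to the point where one can check it: no involution on the non-trivial solutions is defined, and no argument is given that it is fixed-point-free. As it stands the proposal is a plausible plan whose two hard steps (eliminating non-trivial solutions, and extracting the exact thresholds $24$, $v_2(d)\le 4$, $v_3(d)\le 1$) are both missing.
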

\begin{proof}
By \cref{prop:main} we have $\frac{1}{a} = \frac{1}{b} + \frac{1}{c}$. Our condition $f_a\equiv f_bf_c\pmod 2$ corresponds to the statement that the number of solutions $(y,z)\in \mathbb{N}^2$ to\begin{equation}\label{eqabc} cy^2+bz^2=bcx+b+c=(b+c)(ax+1),\quad b\mid (y^2-1),\quad c\mid (z^2-1) \end{equation}
is odd if and only $ax+1$ is a square.

First note that $(b'+c')\mid d$. This is because $a(b'+c')=db'c'$ and $\gcd (b'c',b'+c')=1$.

We consider any prime number $p\neq 2,3$ satisfying the following modulo constraints
\begin{equation}
\label{assumptions}
a\mid (p^2-1),\quad \left( \frac{-b'c'}{p}\right)=-1.
\end{equation}
Letting $\delta = \frac{p^2 - 1}{a}$, we substitute $x=\delta $ to Equation (\ref{eqabc}) and obtain $cy^2+bz^2=p^2(b+c),$ or $c'y^2+b'z^2=p^2(b'+c')$. Since $\big( \frac{-b'c'}{p}\big)=-1,$ it follows $p$ divides $y,z$. Furthermore, any solution $(y, z)$ of \cref{eqabc} must have $y > 0$, as otherwise $b | y^2 - 1 = 1$, so $b = 1$, which contradicts the identity $\frac{1}{a} = \frac{1}{b} + \frac{1}{c}$. Symmetrically, we must have $z > 0$. Thus the only possible solution to \cref{eqabc} is $(y,z)=(p,p)$. Since $ax + 1 = a\delta + 1 = p^2$ is a square, \cref{eqabc} must have an odd number of solutions, thus $(y,z) = (p,p)$ must be a solution to \cref{eqabc}. Therefore, we obtain $b\mid (p^2-1),\ c\mid (p^2-1),$ or equivalently
\begin{equation}
\label{consequence}
db'\mid (p^2-1),\quad dc'\mid (p^2-1).
\end{equation}

We have thus shown that for any prime $p$, \cref{assumptions} implies \cref{consequence}. We now attempt to construct some $p$ that satisfies \cref{assumptions} but not \cref{consequence}. We next introduce a technical lemma to establish the existence of a desirable $p$.

\begin{lemma}\label{existence-of-p}
Assume $u$ and $v$ are positive integers with $v$ square free. Let $Q = \lcm(8, u, v)$. Then there exists a residue class $q$ mod $Q$ with $(q, Q) = 1$, such that for any prime number $p \equiv q \pmod Q$ we have 
$$p^2 - 1\equiv 0 \pmod u,\quad \left(\frac{-v}{p}\right) = -1.$$
Furthermore, if $v$ contains a prime divisor that is 3 mod 4 and $u$ is divisible by $8$, we could further take $q$ mod $Q$ such that $v_2(p^2 - 1) = v_2(Q)$.
\end{lemma}

Before showing the lemma, we see how it finishes the proof of \cref{prop:b'c'}. We take $u = a = \frac{d}{b' + c'}b'c'$ and $v$ be the square-free reduction of $b'c'$. For every prime $q\mid (b'+c')$, we write $b'+c'=q^{\alpha}s,\ d=q^{\beta}d'$ where $\gcd (s,q)=\gcd(d',q)=1$. As $b' + c' | d$, we have $\alpha \leq \beta$. Note that
$$v_q(u) = \beta - \alpha,\quad v_q(v) = 0.$$

If $q\geq 3$, then \cref{existence-of-p} asserts that fixing $p \pmod{q^{\beta - \alpha}}$ suffices to ensure that $p$ satisfies \eqref{assumptions}. But for all such $p$, we must have $q^{\beta}\mid (p^2-1)$ by \cref{consequence}. This implication holds for all $p$ only if $q = 3$ and $\beta = 1$. 

If $q = 2$, we show that $\beta \leq 3$. If $\beta \geq 4$, then both $b'$ and $c'$ are odd, and $b' + c' \equiv 0 \pmod{16}$, so $v$ must have a prime divisor of 3 mod 4. By \cref{existence-of-p}, we could take $p$ such that $v_2(p^2 - 1) = v_2(Q)$ and $p$ satisfies \eqref{assumptions}. Now
$$v_2(Q) = \max(\beta - \alpha, 3) < \beta$$
contradiction the consequence \cref{consequence} that $q^{\beta}\mid (p^2-1)$. Therefore we must have $\beta \leq 3$ and $\alpha \leq \beta \leq 3$, finishing the proof.
\end{proof}
\begin{proof}[Proof of Lemma \ref{existence-of-p}]
We assume the prime factorization of $v$ is $v=q_1q_2 \cdots q_m$. Let the prime factorization of $u$ be $u=p_1^{a_1}\cdots p_n^{a_n}q_1^{l_1}q_2^{l_2}\cdots q_m^{l_m}$, where $p_1,\cdots,p_n$ are prime factors distinct from $q_1,\cdots,q_m$. Then we want to satisfy the congruence constraints
\begin{equation}
 \label{eq: constraints}
p^2 \equiv 1 \pmod{p_i^{a_i}},\quad p^2 \equiv 1 \pmod{q_i^{l_i}},      
\end{equation}
and $\big(\frac{-q_1q_2\cdots q_m}{p} \big)=-1$. Now we do case work. In each case, we describe a number of congruence conditions for $p$, and show that they guarantee that $p$ satisfy the above constraints. Then the Chinese remainder theorem shows that a modulo class mod $Q$ can be taken such that all the congruence conditions are satisfied.

\textbf{Case 1}: If there exists an $1 \leq i \leq m$ such that $q_i$ is 3 mod 4. Then we could first satisfy \cref{eq: constraints} for all other prime divisors of $Q$ arbitrarily. In particular, we could choose $p \pmod{2^{v_2(Q)}}$ such that $v_2(p^2 - 1) = v_2(Q)$. Then by quadratic reciprocity,
$$\left(\frac{-q_1q_2\cdots q_m}{p} \right) = c \left(\frac{p}{q_i}\right),$$
where $c \in \{\pm 1\}$ is fixed. So we could adjust the sign of $p \pmod{q_i}$ to ensure that $p \equiv \pm 1 \pmod{q_i^{l_i}}$ and $\big(\frac{-q_1q_2\cdots q_m}{p} \big) = -1$. 

\textbf{Case 2}: If $v$ is odd and all $q_i$ are 1 mod 4. Then by quadratic reciprocity we have
$$\left(\frac{-q_{1}\cdots q_{m}}{p} \right)=\left(\frac{-1}{p}\right) \left(\frac{p}{q_{1}}\right)\cdots \left(\frac{p}{q_{m}}\right).$$
Now note that to ensure $p^2 \equiv 1 \pmod{2^{v_2(u)}}$, we could take either of $p \equiv \pm 1 \pmod{2^{v_2(Q)}}$. Thus we could first choose modulo classes that satisfy \cref{eq: constraints} for all odd prime divisors of $Q$, then choose the sign in $p \equiv \pm 1 \pmod{2^{v_2(Q)}}$ to ensure the quadratic character $\big(\frac{-q_1q_2\cdots q_m}{p} \big) = -1$.

\textbf{Case 3}: If $q_{1}=2$ and all other $q_i$ is 1 mod 4. We observe $$\left(\frac{-2q_{2}\cdots q_{m}}{p} \right)=\left(\frac{-2}{p}\right) \left(\frac{p}{q_{1}}\right)\cdots \left(\frac{p}{q_{m}}\right).$$ 
Now note that to ensure $p^2 \equiv 1 \pmod{2^{v_2(u)}}$, we could take either $p \equiv \pm 1 \pmod{2^{v_2(Q)}}$. If  $p \equiv 1 \pmod{2^{v_2(Q)}}$ then $\big(\frac{-2}{p}\big) = 1$, and if $p \equiv -1 \pmod{2^{v_2(Q)}}$ then $\big(\frac{-2}{p}\big) = -1$. Thus we could first choose modulo classes that satisfy \cref{eq: constraints} for all odd prime divisors of $Q$, then choose the $\pm 1$ to ensure the quadratic character is $-1$.
\end{proof}

\subsection{A bound on $d$.}
The final piece of our proof is the following proposition.
\begin{proposition}
\label{prop:d}
Suppose positive integers $a,b,c$ satisfy $f_a \equiv f_bf_c\pmod 2$ and $b' \neq c'$. Then $d \mid 24(b' - c')$.
\end{proposition}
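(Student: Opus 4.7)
The plan is to build on the technique of Proposition \ref{prop:b'c'} by feeding the counting equation \eqref{eqabc} richer inputs. Recall from that proof that for any prime $p$ satisfying $a \mid p^2 - 1$ and $\left(\frac{-b'c'}{p}\right) = -1$, the descent forces $\lcm(b, c) = db'c' \mid p^2 - 1$. Fixing such a $p$ from Lemma \ref{existence-of-p}, I would substitute $ax + 1 = (pM)^2$ into \eqref{eqabc} for a variable positive integer $M$ with $a \mid M^2 - 1$; the right side remains a perfect square, so the parity of the solution count is odd. Since $\left(\frac{-b'c'}{p}\right) = -1$, every solution has $p \mid y, z$, and writing $y = pY$, $z = pZ$ reduces the equation to $c' Y^2 + b' Z^2 = (b' + c') M^2$. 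Because $b, c \mid p^2 - 1$, the divisibility conditions transport cleanly to $b \mid Y^2 - 1$ and $c \mid Z^2 - 1$, so the count of valid inner pairs $(Y, Z)$ must also be odd for every admissible $M$.

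The key observation is that $d \nmid a$ always holds (since $\gcd(b'c', b'+c') = 1$ and $b' + c' \geq 2$), so there exist $M$ with $a \mid M^2 - 1$ but $\lcm(b, c) \nmid M^2 - 1$. For such $M$ the diagonal candidate $(Y, Z) = (M, M)$ fails the divisibility test, forcing at least one non-diagonal valid solution $(Y, Z) \neq (M, M)$. Parametrizing the conic $c'Y^2 + b'Z^2 = (b'+c')M^2$ by rational lines through the base point $(M, M)$, each non-diagonal rational point takes the form
\[
(Y, Z) = \frac{M}{c'\lambda^2 + b'\mu^2} \bigl( b'\mu^2 - c'\lambda^2 - 2b'\lambda\mu, \; c'\lambda^2 - b'\mu^2 - 2c'\lambda\mu \bigr)
\]
for some direction $(\lambda : \mu) \in \mathbb{P}^1_{\mathbb{Q}}$. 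Imposing integrality of $Y, Z$ together with the divisibilities $b \mid Y^2 - 1$ and $c \mid Z^2 - 1$, and letting $M$ range over residue classes modulo $d$ outside $\{\pm 1\}$, should accumulate into the bound $d \mid 24(b'-c')$.

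The main obstacle will be cleanly extracting the factor $b'-c'$ from this parametrization, since the numerators feature $b'\mu^2 - c'\lambda^2$ rather than a bare $(b'-c')$. I expect to handle this through a prime-by-prime analysis: for each prime power $\ell^k \mid d$, choose $(\lambda, \mu)$ and a residue of $M$ modulo $\ell^k$ to isolate a single modular constraint at a time. The factor $24$ will come from the 2- and 3-adic obstructions already present in the case analysis of Lemma \ref{existence-of-p}, while for primes $\ell \geq 5$ the $\ell$-adic structure of the non-diagonal solutions should force $\ell^k \mid (b' - c')$. A cleaner execution may be to recast $c'Y^2 + b'Z^2 = (b'+c')M^2$ as a norm equation in $\mathbb{Z}[\sqrt{-b'c'}]$, where $b'-c'$ emerges as the trace of a twist element encoding the $b$-versus-$c$ asymmetry that disappears when $b' = c'$.
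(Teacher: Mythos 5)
Your setup is sound as far as it goes: reducing $ax+1=(pM)^2$ to counting pairs $(Y,Z)$ with $c'Y^2+b'Z^2=(b'+c')M^2$, $b\mid (Y^2-1)$, $c\mid (Z^2-1)$ is valid, and the observation that this count must be odd for every admissible $M$ is the right thing to exploit (the auxiliary factor $p$ is actually unnecessary; one can take $ax+1=M^2$ directly). But the proposal stops exactly where the proof has to begin. Knowing that \emph{some} non-diagonal solution exists tells you nothing unless you control its residue modulo $d$. The parametrization by lines through $(M,M)$ produces all \emph{rational} points, but deciding which directions $(\lambda:\mu)$ give \emph{integral} points for a general $M$ is the representation problem for the form $c'Y^2+b'Z^2$; its answer depends on the factorization of $M$ in the order of discriminant $-4b'c'$ and can produce many solution pairs whose residues you have not constrained. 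Every step that would actually extract $q^h\mid 8(b'-c')$ is deferred with ``should'' or ``I expect,'' so there is no proof here. (A secondary issue: the inference from $d\nmid a$ to the existence of $M$ with $a\mid (M^2-1)$ but $\lcm(b,c)\nmid (M^2-1)$ is not automatic --- for instance every odd square is $1$ modulo $8$, not just modulo $2$ --- so even that existence claim needs an argument.)

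The paper supplies precisely the two missing ingredients. First, it takes the right-hand side to be $(b'+c')p$ with $p$ a \emph{prime} represented as $u^2+b'c'v^2$; for such $p$, an elementary congruence argument modulo $p$ (\cref{lem: solutions}) shows the only integral solutions with the required coprimality are, up to sign, $(u-b'v,\,u+c'v)$ and $(u+b'v,\,u-c'v)$ --- the solution set is pinned down completely, with no conic parametrization needed. Second, Weber's theorem on primes represented by quadratic forms lets one prescribe $u$ and $v$ in arithmetic progressions modulo $rq^w$; choosing $u\equiv (b'-c')/(b'+c')$ and $v\equiv 2/(b'+c')$ modulo $q^w$ (and $u\equiv 1$, $v\equiv 0$ modulo $r$) makes the first pair automatically satisfy the divisibility conditions, and since $p$ is not a square the parity forces the second pair to satisfy them too, yielding $\bigl((3b'-c')/(b'+c')\bigr)^2\equiv 1\pmod{q^{v_q(b')+h}}$ and hence $q^h\mid 8(b'-c')$. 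To salvage your plan you would need substitutes for both ingredients: a class of right-hand sides whose integral solution set is explicitly known, and an equidistribution statement letting you choose the residues of those solutions. That is essentially what \cref{lem: solutions} and Weber's theorem provide.
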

We first prove a lemma.
\begin{lemma}
\label{lem: solutions}
Suppose $p$ is a prime coprime to $b'c'$ that can be expressed in the form $u^2 + b'c' v^2$, where $u,v$ are integers. Then the only integer solutions to $c'y^2 + b'z^2 = (b' + c')p$ with $y$ coprime to $b'$ and $z$ coprime to $c'$ are (up to sign) $(y,z) = (u - b'v, u + c'v)$ and $(y,z) = (u + b'v, u' - c'v)$.
\end{lemma}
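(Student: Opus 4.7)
The plan is to view the identity $c'y^2 + b'z^2 = (b' + c')p$ as a norm equation in the quadratic order $\mathbb{Z}[w]$, with $w = \sqrt{-b'c'}$. Multiplying through by $c'$ rewrites it as $(c'y)^2 + b'c'z^2 = c'(b'+c')p$, so the element $\beta := c'y + zw$ has norm $N(\beta) = c'(b'+c')p$. Each factor on the right has an explicit norm representation: $c'(b'+c') = c'^2 + b'c' = N(c'+w)$, and by hypothesis $p = N(u+vw)$. I would first verify both claimed solutions directly by expanding the four products $(c' \pm w)(u \pm vw)$ in $\mathbb{Z}[w]$; reading off the real and $w$-parts and using the conjugation $\beta \leftrightarrow \bar{\beta}$ to identify $z$ with $-z$, the four sign choices collapse to the two pairs $(y,z) = (u \mp b'v,\, u \pm c'v)$ in the lemma.

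For the converse, the central technical tool is a divisibility criterion in $\mathbb{Z}[w]$: $(c'+w) \mid \beta$ if and only if $(b'+c') \mid (y-z)$, and $(c'-w) \mid \beta$ if and only if $(b'+c') \mid (y+z)$. Both follow from the explicit calculation $\beta(c' \mp w) = c'\bigl[(c'y \pm b'z) + (y \mp z)w\bigr]$ together with $\gcd(c',\, b'+c') = 1$. Reducing the defining equation modulo $b'+c'$ and using $b' \equiv -c'$ forces $(b'+c') \mid (y-z)(y+z)$, so after distributing the relevant prime-power factors between $y-z$ and $y+z$, one of the factors $c' \pm w$ can be extracted from $\beta$, leaving an element $\gamma \in \mathbb{Z}[w]$ of norm $p$. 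The proof then finishes by showing any such $\gamma$ must equal $\pm(u \pm vw)$, which recovers exactly the four products enumerated above.

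The main obstacle is that $\mathbb{Z}[w]$ is generally not a UFD, so the final step is nontrivial. I would handle it ideal-theoretically: the congruence $(u/v)^2 \equiv -b'c' \pmod{p}$, which follows from $p = u^2 + b'c'v^2$ and $\gcd(p,v) = 1$, shows that the rational prime $p$ splits in $\mathbb{Z}[w]$ as a product of two distinct prime ideals of norm $p$, generated by $(p,\, u \pm vw)$; any element of norm $p$ must generate one of these ideals, and combined with the unit group $\{\pm 1\}$ (valid once $b'c' > 1$) this pins $\gamma$ down to $\pm(u \pm vw)$. A parallel subtlety appears at prime divisors of $b'+c'$, especially at the prime $2$, where $(y-z)(y+z)$ can split the divisibility between both factors rather than concentrating it in one; the coprimality hypotheses on $y$ and $z$ together with the ideal-theoretic matching are exactly what rule out these spurious splittings, confining the solution list to the two pairs asserted in the lemma.
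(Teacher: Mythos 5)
Your setup is natural---the lemma is at bottom a statement about composition of the norm form of $\mathbb{Z}[w]$, $w=\sqrt{-b'c'}$, and your verification of the two claimed solutions, your divisibility criterion for $c'\pm w$, and your classification of elements of norm $p$ are all sound (the unit group is $\{\pm1\}$ once $b'c'>1$, and $p$ is coprime to the conductor, so the ideal-theoretic step is safe). The genuine gap is the step where you peel the factor $c'\pm w$ off of $\beta=c'y+zw$. By your own criterion this requires $(b'+c')\mid(y-z)$ or $(b'+c')\mid(y+z)$, but reducing the equation modulo $b'+c'$ only yields $(b'+c')\mid(y-z)(y+z)$, and when $b'+c'$ is composite (as it always is in the application, where $b'+c'\mid 24$) the divisibility can genuinely split between the two factors. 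You acknowledge this and assert that ``the coprimality hypotheses together with the ideal-theoretic matching'' rule out the splitting, but that sentence is the entire content of the converse direction and no argument is supplied. The danger is not hypothetical: take $b'=1$, $c'=3$, $p=7=2^2+3\cdot1^2$. Then $(y,z)=(2,4)$ satisfies $3y^2+z^2=28=(b'+c')p$ and both coprimality hypotheses, yet $y-z=-2$ and $y+z=6$ are each divisible by $2$ but not by $4=b'+c'$, so neither $c'+w$ nor $c'-w$ divides $\beta=6+4w$ and your factorization never starts. (This $(2,4)$ is in fact a solution not on the lemma's list, so the hypotheses as stated cannot rescue the step; any complete argument must exploit something extra, e.g.\ $b',c'\ge 2$, at exactly this point---a blind spot the paper's own one-line dismissal of the ``one bracket is $0$'' case shares.)

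The paper runs the descent in the opposite order, which sidesteps the problem: it divides out the prime $p$ first rather than the composite $b'+c'$. From $c'y^2\equiv-b'z^2$ and $u^2\equiv-b'c'v^2\pmod p$ one gets $yz^{-1}\equiv\pm b'vu^{-1}\pmod p$, a clean two-way dichotomy because $p$ is prime; the composition identity $(c'y^2+b'z^2)(u^2+b'c'v^2)=c'(uy-b'vz)^2+b'(uz+c'vy)^2$ then shows both bracketed quantities are divisible by $p$, and dividing by $p^2$ leaves $c'A^2+b'B^2=b'+c'$ with $A,B\in\mathbb{Z}$. That equation is solved by inspection ($A,B=\pm1$, or one of them zero), with no need to understand the arithmetic of the primes dividing $b'+c'$ in $\mathbb{Z}[w]$---which is precisely the delicate, conductor-afflicted part of your plan. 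If you want to keep your framework, reverse the order of extraction accordingly: pull out the norm-$p$ factor $u\pm vw$ from $\beta$ first (your ideal argument already accomplishes this), and only then analyse the small remaining element of norm $c'(b'+c')$.
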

\begin{proof}
If $c'y^2 + b'z^2 = (b' + c')p$, then by congruence modulo $p$ we must have 
$$yz^{-1} \equiv \pm b'vu^{-1}\pmod p.$$
By swapping the sign of $y,z$, we can, without loss of generality, assume that
$$yz^{-1} \equiv b'vu^{-1}\pmod p.$$
Then we have 
$$c'\left(\frac{uy - b'vz}{p}\right)^2 + b'\left(\frac{uz + c'vy}{p}\right)^2 = b' + c'.$$
Now that the quantities in the bracket are both integers, either both are $\pm 1$ or one of them is $0$. The latter is impossible by the imposed coprime condition, and the former gives the two pairs of solutions.
\end{proof}
\begin{proof}[Proof of \cref{prop:d}]
Let $q^{h}$ be any maximal power of a prime dividing $d$. It suffices to show $q^h \mid 24(b' - c')$. The case of $b' + c' $ not coprime to $q$ is handled in \cref{prop:b'c'}, so we assume $b' + c'$ is coprime to $q$. 

Assume $\lcm(a, b, c) = q^w r$, where $(r, q) = 1$. We consider the congruence classes $s, t$ modulo $rq^w$ defined by
$$s \equiv 1 \pmod r,\quad s\equiv \frac{b' - c'}{b' + c'}\pmod{q^w},$$
and
$$t \equiv 0 \pmod r,\quad t\equiv \frac{2}{b' + c'} \pmod{q^w},$$
which are well-defined as $(b' + c', q) = 1$.
Then we have
$$s^2 + b'c' t^2 \equiv 1 \pmod{rq^w}.$$
Therefore, by a well-known theorem of Weber(\cite{Weber}), there exist infinitely many primes $p$ such that $p = u^2 + b'c' v^2$ with $u \equiv s \pmod{rq^w} $, $v \equiv t \pmod{rq^w} $. Furthermore, for all such $p$ we have $p \equiv 1 \pmod a $. We now study the target set
$$\{(y, z):\ c'y^2 + b'z^2 = (b' + c')p,\ b \mid (y^2 - 1),\ c \mid (z^2 - 1)\}.$$
By \cref{lem: solutions}, the only possible integer solutions up to sign are $(y,z) = (u - b'v, u + c'v)$ and $(y,z) = (u + b'v, u' - c'v)$. The first pair is actually a solution since
$$y^2 \equiv (u - b'v)^2 \equiv 1 \pmod{rq^w},$$
and
$$z^2 \equiv (u + c'v)^2 \equiv 1 \pmod{rq^w}.$$
As $p$ is not a square, the second pair must be a solution as well. This implies
$$(u + b'v)^2 \equiv 1 \pmod{b'd}.$$
However, we have that
$$(u + b'v)^2 \equiv \left(\frac{3b' - c'}{b' + c'}\right)^2 \pmod{q^w}.$$
As $w \geq v_q(b') + v_q(d)$ we conclude that
$$\left(\frac{3b' - c'}{b' + c'}\right)^2 \equiv 1 \pmod{q^{v_q(b') + h}}$$
which implies
$$8(b' - c') \equiv 0 \pmod{q^h}.$$
Thus we have $q^h \mid 24(b' - c')$ as desired.
\end{proof}
\begin{corollary}[\cref{thm:Classification}]
The only triples of positive integers $(a,b,c)$ such that $f_a\equiv f_bf_c\pmod 2$ are of the form $$(2q,4q,4q),(4q,8q,8q),$$ where $q$ is any positive odd number, or are members of the finite set: $$\{(4,6,12),(6,8,24),(8,12,24),(10,12,60),(15,24,40),(16,24,48),(20,24,120),(21,24,168)\}.$$
 \end{corollary}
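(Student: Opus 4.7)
The plan is to combine Propositions~\ref{prop:main}, \ref{prop:b'c'}, and \ref{prop:d}, splitting the analysis according to whether $b' = c'$. Proposition~\ref{prop:main} already gives $\tfrac{1}{a} = \tfrac{1}{b} + \tfrac{1}{c}$, so once $(b', c', d)$ is fixed the triple $(a,b,c)$ is determined.

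\emph{Case 1: $b' = c'$.} Coprimality forces $b' = c' = 1$, so $b = c$ and Proposition~\ref{prop:main} yields $a = b/2$. In characteristic $2$ we have $f_b f_c = f_b^2 \equiv f_b(q^2) \pmod 2$; since $b = 2a$, the even-power coefficients of $f_a$ match those of $f_b(q^2)$ automatically, so the identity $f_a \equiv f_bf_c \pmod 2$ collapses to the statement that $ak + 1$ is never a perfect square for $k$ odd. Writing $a = 2^e m$ with $m$ odd, for any odd $k$ one has $ak + 1 \equiv 2^e + 1 \pmod{2^{e+1}}$. Listing the quadratic residues modulo $2^{e+1}$ shows that $2^e + 1$ is a non-residue precisely when $e \in \{1, 2\}$, while $e = 0$ and $e \geq 3$ admit explicit squares (for instance $1 \cdot 3 + 1 = 4$ and $8 \cdot 1 + 1 = 9$). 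This recovers exactly the families $(2q, 4q, 4q)$ and $(4q, 8q, 8q)$ for $q$ odd.

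\emph{Case 2: $b' \neq c'$ (WLOG $b' < c'$).} Proposition~\ref{prop:b'c'} restricts $b' + c' \in \{3, 4, 6, 8, 12, 24\}$, together with $v_2(d) \leq 4$ whenever $2 \mid (b' + c')$ and $v_3(d) \leq 1$ whenever $3 \mid (b' + c')$; Proposition~\ref{prop:d} adds $d \mid 24(c' - b')$; and we always have $(b' + c') \mid d$. Enumerating coprime pairs $(b', c')$ with $b' + c' \in \{3, 4, 6, 8, 12, 24\}$ against the admissible divisors $d$ yields a finite explicit list of candidates $(a, b, c) = (db'c'/(b'+c'),\, db',\, dc')$. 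The eight listed triples with small parameters are confirmed using the classical Jacobi identities $(q;q)_\infty^a \equiv f_{24/a} \pmod 2$ for $a \in \{1, 2, 3, 4, 6\}$ -- for instance $f_4 \equiv (q;q)_\infty^6 \equiv (q;q)_\infty^4 \cdot (q;q)_\infty^2 \equiv f_6 f_{12}$, and similarly for $(6, 8, 24)$ and $(8, 12, 24)$ -- while the remaining listed triples and the non-solutions in the enumeration are resolved by a finite low-degree coefficient comparison (for example $(2, 3, 6)$ is ruled out at $q^1$, since $2 \cdot 1 + 1 = 3$ is not a square while the pair $(i, j) = (1, 0)$ contributes a $q^1$ term to $f_3 f_6$). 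For the subfamily of candidates with $c = 24$, this coincides with the numerical verification up to $10^5$ already carried out by Ballantine-Merca.

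\textbf{Expected obstacle.} The conceptual work has already been done by the three propositions, so what remains is terminal bookkeeping in Case 2: disciplined enumeration of candidates subject to all of the divisibility and $p$-adic constraints, plus verification of each. The mildly awkward step is handling the larger valid triples such as $(10, 12, 60)$ and $(21, 24, 168)$, whose validity does not reduce to a one-line Jacobi manipulation and must be obtained from an explicit coefficient check (or an auxiliary theta identity) up to a Sturm-type bound.
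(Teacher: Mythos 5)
Your proposal is correct and, for the case $b'\neq c'$, follows the paper exactly: combine \cref{prop:main}, \cref{prop:b'c'} and \cref{prop:d} to reduce to a finite enumeration of candidates $(d,b',c')$, then verify each candidate (the paper is no more explicit than you are about how the surviving identities such as $(10,12,60)$ are certified, so you are on equal footing there). Where you genuinely diverge is the case $b'=c'=1$. The paper keeps working with the solution-counting formulation: it applies the involution $(y,z)\mapsto(z,y)$ to the set $\{(y,z): y^2+z^2=2(\tfrac{d}{2}x+1),\ d\mid (y^2-1),\ d\mid(z^2-1)\}$, reads off the parity from the fixed points, and reduces to the condition that a square is $1 \bmod d$ iff it is $1 \bmod d/2$, which forces $v_2(d)\in\{2,3\}$. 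You instead use the Frobenius squaring $f_b^2\equiv f_b(q^2)\pmod 2$ to observe that the even-indexed coefficients match for free, so the identity is equivalent to ``$ak+1$ is never a square for $k$ odd,'' and then settle this by a $2$-adic residue computation on $2^e+1 \bmod 2^{e+1}$ where $e=v_2(a)$. Both routes land on the same two families; yours is arguably cleaner because it bypasses the involution and works directly with coefficients. The one point you should tighten: for $e=0$ and $e\geq 3$ you assert that squares of the form $ak+1$ with $k$ odd exist, but you only exhibit them for $a=1$ and $a=8$; the claim is needed for \emph{every} $a$ with $v_2(a)=e$, and showing that $2^e+1$ is a quadratic residue mod $2^{e+1}$ does not by itself produce an integer $x$ with $x^2\equiv 1\pmod a$ and $v_2(x^2-1)=e$ exactly. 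This is easily repaired (choose $x\equiv 1\pmod{m}$ and $x\equiv 2^{e-1}+1\pmod{2^e}$ for $e\geq 3$, resp.\ $x=a-1$ for $a\geq 3$ odd, by the Chinese remainder theorem), but as written it is an appeal to two examples rather than a proof.
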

 \begin{proof}
 By \cref{prop:b'c'} and \cref{prop:d}, we have $\frac{1}{a} = \frac{1}{b} + \frac{1}{c}$, $(b' + c') \mid 24$, and $d \mid 24(b' - c')$. Thus we have reduced all cases where $b' \neq c'$ to a finite computation, which we carried out to conclude the sporadic tuples above.
 
 For the case where $b' = c'$, since $b'$ and $c'$ are coprime, we have $b' = c' = 1$. The statement $f_a\equiv f_bf_c\pmod 2$ now reduces to 
 $$\abs{\{(y, z):\ y^2 + z^2 = 2\big(\frac{d}{2}x + 1\big),\ d \mid (y^2 - 1),\ d \mid (z^2 - 1)\}} \equiv 1 \pmod 2$$
 if and only if $\frac{d}{2}x + 1$ is a square. There is an involution on this set $(y,z) \to (z,y)$. Thus the parity of this set is equal to the number of fixed point, that is
 \begin{equation}
  \label{eq: cardinality} \abs{\{y:\ y^2 = \frac{d}{2}x + 1,\ d \mid (y^2 - 1)\}}   
 \end{equation}
 $$.$$
 If $\frac{d}{2}x + 1$ is not a square then \cref{eq: cardinality} is obviously zero. If $\frac{d}{2}x + 1$ is a square, then  \cref{eq: cardinality} is non-zero if and only if $\frac{d}{2}x + 1$ is equal to $1$ modulo $d$. This implies that under our assumptions, a square is equal to $1$ modulo $d$ if and only if it is equal to $1$ modulo $d / 2$. This is equivalent to $v_2(d) \in \{2, 3\}$, giving the tuples $(a,b,c) = (2q, 4q, 4q)$ and $(a,b,c) = (4q, 8q, 8q)$.
 \end{proof}

\end{document}